\gdef\n@te#1#2{\leavevmode\vadjust{%
 {\setbox\z@\hbox to\z@{\strut#1}%
  \setbox\z@\hbox{\raise\dp\strutbox\box\z@}\ht\z@=\z@\dp\z@=\z@%
  #2\box\z@}}}
\gdef\leftnote#1{\n@te{\hss#1\quad}{}}
\gdef\rightnote#1{\n@te{\quad\kern-\leftskip#1\hss}{\moveright\hsize}}
\gdef\?{\FN@\qumark}
\gdef\qumark{\ifx\next"\DN@"##1"{\leftnote{\rm##1}}\else
 \DN@{\leftnote{\rm??}}\fi{\rm??}\next@}}
\DeclareFontFamily{OT1}{wncyr}{\hyphenchar\font45 }
\DeclareFontShape{OT1}{wncyr}{m}{n}{%
   <5> <6> <7> <8> <9> gen * wncyr
   <10> <10.95> <12> <14.4> <17.28> <20.74>  <24.88>wncyr10}{}
\DeclareFontShape{OT1}{wncyr}{m}{it}{%
   <5> <6> <7> <8> <9> gen * wncyi
   <10> <10.95> <12> <14.4> <17.28> <20.74> <24.88> wncyi10}{}
\DeclareFontShape{OT1}{wncyr}{m}{sc}{%
   <5> <6> <7> <8> <9> <10> <10.95> <12> <14.4>
   <17.28> <20.74> <24.88>wncysc10}{}
\DeclareFontShape{OT1}{wncyr}{b}{n}{%
   <5> <6> <7> <8> <9> gen * wncyb
   <10> <10.95> <12> <14.4> <17.28> <20.74> <24.88>wncyb10}{}
\def\rus{\usefont{OT1}{wncyr}{m}{n}\cyracc\fontsize{9}{11pt}\selectfont}
\theoremstyle{plain}
\newtheorem{theorem}{Theorem}
\newtheorem{remark}{\bf Remark}
\newtheorem*{cornonumber}{Corollary}
\theoremstyle{definition}
\newtheorem{definition}{Definition}
\newtheorem{nothing*}[theorem]{}
\newtheorem{subnothing*}[sub]{}
\newtheorem{example}{Example}
\theoremstyle{remark}
\def\bAn{{\mathbf A}\!^n}
\def\bAm{{\mathbf A}\!^m}
\def\bAr{{\mathbf A}\!^r}
\def\bAt{{\mathbf A}\!^t}
\def\bA1{{\mathbf A}\!^1}
\def\P1{{\bf P}^1}
\def\Autn{{\rm Aut}\,\mathbf A\!^{n}}
\newcommand{\dss}{\hskip -2mm\rotatebox{68}{\raisebox{-1.8\height}{\mbox{\normalsize -\hskip .1mm-\hskip .1mm-}}}\hskip -.6mm}
\begin{document}

\title[Around the Abhyankar--Sathaye Conjecture]{Around the Abhyankar--Sathaye Conjecture}

\author[Vladimir  L. Popov]{Vladimir  L. Popov${}^*$}
\address{Steklov Mathematical Institute,
Russian Academy of Sciences, Gubkina 8, Moscow\\
119991, Russia}
 \email{popovvl@mi.ras.ru}

\address{National Research University\\ Higher School of Economics\\ Myasnitskaya
20\\ Moscow 101000,\;Russia}

\thanks{
 ${}^*$\,Supported by
 grants {\rus RFFI
14-01-00160}, {\rus N{SH}--2998.2014.1}, and the
granting program {\it Contemporary Problems of Theoretical
Mathematics} of the Mathematics Branch of the Russian Academy of Sciences.}


\maketitle

\begin{abstract} A ``rational'' version of the strengthened form of the Commuting Derivation Conjecture,
in which the assumption of
commutativity is dropped, is proved.\;A systematic method of
constructing in any dimension greater than 3
the examples answering in the negative a question by M.\;El\;Kahoui  is developed.
\end{abstract}

\vskip 15mm

\section{Introduction}
Throughout this paper
$k$
stands for an algebraically closed field of characteristic zero which serves as domain of definition for each of the algebraic varieties considered below.

 Recall  that an element $c$ of the polynomial ring $k[x_1,\ldots, x_n]$ in variables
$x_1,\ldots, x_n$ with coefficients in $k$ is called a {\it coordinate} if there are the elements
$t_1,\ldots, t_{n-1}\in k[x_1,\ldots, x_n]$ such that
\begin{equation*}\label{coo}
k[c, t_1,\ldots, t_{n-1}]=k[x_1,\ldots, x_n]
\end{equation*}
(see, e.g.,\,\cite{vdE00}).\;Every coordinate is irreducible and, if  $x_1,\ldots, x_n$ are the standard coordinate functions on the affine space $\bAn$, then the zero locus
$\{c=0\}$
of $c$ in $\bAn$ is isomorphic to  ${\mathbf A}^{\hskip -.3mm n-1}$.\;The
converse is claimed by the classical
\vskip 2mm
\noindent{\bf Abhyankar--Sathaye Conjecture.} {\it If $f\in k[x_1,\ldots, x_n]$ is an irreducible element whose zero locus
in $\bAn$ is isomorphic to ${\mathbf A}^{\hskip -.3mm n-1}$, then $f$ is a coordinate.}

\vskip 2mm

This conjecture is equivalent
to the claim that every closed embedding $\iota\colon {\mathbf A}^{\hskip -.3mm n-1}\hookrightarrow\bAn$ is {\it rectifiable}, i.e., there is an automorphism
$\sigma\in\Autn$ such that
$\sigma\circ\iota\colon {\mathbf A}^{\hskip -.3mm n-1}\hookrightarrow\bAn$ is the standard embedding $(a_1,\ldots, a_{n-1})\mapsto (a_1,\ldots, a_{n-1}, 0)$ (see \cite[Lemma 5.3.13]{vdE00}).

For $n=2$ the Abhyankar--Sathaye conjecture is true
(the Abhyankar--Moh--Suzuki theorem).\;For $n\geqslant 3$ it is still open, though there is a belief that in general it is
false \cite[p.\,103]{vdE00}.

Exploration of this conjecture leads to the problem of constructing closed hypersurfaces in $\bAn$ isomorphic to ${\mathbf A}^{\hskip -.3mm n-1}$, and irreducible polynomials in $k[x_1,\ldots, x_n]$ whose zero loci in $\bAn$ are such hypersurfaces. The following two facts lead, in turn, to the idea of linking this problem with unipotent group actions:
\begin{enumerate}[\hskip 2.2mm\rm(i)]
\item  Every homogeneous space $U/H$, where $U$ is a unipotent algebraic group and $H$ its closed subgroup, is isomorphic to ${\mathbf A}^{\dim U/H}$ (see, e.g.,\,\cite[Prop.\,2(ii)]{Gr58}).
\item  All orbits of every morphic unipotent algebraic group action on a quasi-affine variety $X$ are closed in $X$ (see \cite[Thm.\,2]{Ro61_2}).
\end{enumerate}

In view of  (i) and (ii), every orbit of a morphic unipotent algebraic group action  on $\bAn$ is the image of a closed embedding of
some ${\mathbf A}^{\hskip -.3mm d}$ in $\bAn$.\;In particular, orbits of dimension $n-1$ are the hypersurfaces of the sought-for type.\;Such actions, with a view of getting an approach to the  Abhyankar--Sathaye conjecture, have been the object of study during the last decade, see
\cite{Ma03}, \cite{EK05}, \cite{DEM08}, \cite{DEFM11}.\;In particular, for commutative
unipotent algebraic group actions, the following conjecture   (whose formulation uses the equivalent  language of locally nilpotent derivations, see \cite{F06}) has been put forward:

\vskip 2mm
\noindent {\bf Commuting Derivations Conjecture} (\cite{Ma03}). {\it Let $D$ be a set of $n-1$ commuting locally nilpotent $k$-derivations of $k[x_1,\ldots, x_n]$ linearly independent over $k[x_1,\ldots, x_n]$. Then \begin{equation}\label{c}
\{f\in k[x_1,\ldots, x_n]\mid \partial(f)=0 \;\mbox{for every derivation $\partial\in D$}\}=k[c],
 \end{equation}
 where $c$ is a coordinate in $k[x_1,\ldots, x_n]$.}

\vskip 2mm

This conjecture is open for $n>3$, proved in
 \cite{Ma03} for $n=3$, and follows from Rentschler's theorem \cite{Re68} for $n=2$.\;In \cite[Cor.\,4.1]{EK05} is shown that it is equivalent to a weak version of the Abhyankar--Sathaye conjecture.

On the other hand, in \cite{EK05} is raised the question as to which extent $k[x_1,\ldots, x_n]$ is characterized by property \eqref{c}.\;Namely,
let $\mathcal A$ be a commutati\-ve associative unital
$k$-algebra of transcendence degree $n\!>\!0$ over $k$ such\;that
\begin{enumerate}[\hskip 4.2mm\rm(a)]
\item $\mathcal A$ is a unique factorization domain;
\item
there is a set $D$ of $n-1$  commuting linearly independent over $\mathcal A$ locally nilpotent $k$-derivations of $\mathcal A$.
\end{enumerate}
Consider  the invariant algebra of $D$, i.e., the $k$-algebra
$$
{\mathcal A}^{D}
:=\{a\in \mathcal A\mid \partial(a)=0\;\;\mbox{for every $\partial\in D$}\}.
$$
\vskip 2mm
\noindent
{\bf Question \boldmath
$1$}\,(\cite[p.\,449]{EK05}).
{\it Does the equality}
\begin{equation}\label{cond}
{\mathcal A}^D
=k[c]\;\;\mbox{\it for some element $c\in \mathcal A$}
\end{equation}
{\it imply the existence of elements $s_1,\ldots, s_{n-1}\in \mathcal A$ and $c_1,\ldots, c_{n-1}\in k[c]$ such that
$\mathcal A$ is the polynomial $k$-algebra $k[c, s_1,\ldots, s_{n-1}]$ and $D=\{c_i\partial_{s_i}\}_{i=1}^{n-1}$}?

\vskip 2mm

 This question is inspired by one of the main results of  \cite{EK05},  Theorem 3.1, claiming that for $n=2$ the answer is affirmative.
By
\cite[Thm.\;2.6]{Mi95}, given properties (a) and (b),
equality \eqref{cond} holds and the answer to Question $1$ is affirmative if
 $\mathcal A$ is finitely generated over $k$, the multiplicative group ${\mathcal A}^\star$ of invertible elements of $\mathcal A$ coincides with
$k^\star$, and $n=2$.

The present paper contributes to the Commuting Derivation Conjecture and Question 1.\;In Section \ref{s1} is proved a ``rational'' version of the
 strengthened form of the Commuting Derivation Conjecture, in which the assumption of commutativity is dropped
(see
Theorem \ref{raco}).\;Here ``rational'' means that the notion of ``coordinate'' is replaced by that of
``rational coordinate'' (see Definition  \ref{rcoord}  below).\;Geometrically, the latter means the existence of
a birational (rather than biregular)
automorphism of the ambient affine space that
rectifies the corresponding hypersurface into the standard coordinate hyperplane.\;In Section \ref{s2}, for every  $n\geqslant 4$, is given a systematic method of constructing the pairs $(\mathcal A, D)$, for which the answer to Question 1 is negative.\;Section \ref{rema} contains some remarks.

 \vskip 2mm

\noindent {\it Notation, conventiones, and some generalities}

\vskip 2mm

Below, as in \cite{Bor91}, \cite{Sp98}, ``variety'' means ``algebraic variety'' in the sense of Serre.\;The standard notation and conventions of
\cite{Bor91}, \cite{Sp98},
and \cite{PV94}
are used freely.\;In particular, the algebra of functions regular on a variety $X$ is denoted by $k[X]$ (not by $\mathcal O(X)$ as in
\cite{DEFM11}, \cite{DEM08}).

Given an algebraic variety $Z$, below we denote the Zariski tangent space of $Z$ at a point $z\in Z$ by
${\rm T}_{Z, z}$.

 Let  $G$ be an algebraic group and let $X$ be a variety.
 Given an action
\begin{equation}\label{action}
\alpha\colon G\times X\to X
 \end{equation}
 of $G$ on $X$ and the elements $g\in G$, $x\in X$,  we denote $\alpha(g, x)\in X$ by $g\cdot x$. The $G$-orbit and the $G$-stabilizer of $x$ are denoted resp.\;by $G\cdot x$ and $G_x$.\;If
 \eqref{action} is a morphism, then $\alpha$ is called a {\it regular} (or {\it morphic}) {\it action}.
A regular action $\alpha$ is called {\it locally free} if there is a dense open subset $U$ of $X$ such that the $G$-stabilizer of every point of $U$  is trivial.

Assume that $X$ is irreducible.
The map
\begin{equation}\label{BA}
{\rm Bir}\,X\to {\rm Aut}_k\,k(X), \qquad \varphi\mapsto (\varphi^*)^{-1},
  \end{equation}
  is a group isomorphism.\;We
identify ${\rm Bir}\,X$ and ${\rm Aut}_k\,k(X)$
  by means of
   \eqref{BA} when we
    consider action of a subgroup of ${\rm Bir}\,X$ by $k$-automor\-phisms of $k(X)$ and, conversely,
    action of a subgroup of
  ${\rm Aut}_kk(X)$ by birational automorphisms of\;$X$.

Let $\theta\colon G\to {\rm Bir}\,X$ be an abstract group homomorphism.\;It determines an action of $G$ on $X$ by birational isomorphisms.\;If the domain of definition of the partially defined map $G\times X\to X$, $(g, x)\mapsto \theta(g)(x)$ contains a dense open subset of $G\times X$ and coincides on it with a rational map  $\varrho\colon G\times X\dashrightarrow X$, then
$\varrho$
is called a {\it rational action} of $G$ on $X$.

By \cite[Thm.\;1]{Ros56},
for every rational action
 $\varrho$ there is a regular action of $G$ on an irreducible variety $Y$, the open subsets $X_0$ and $Y_0$ of resp.\;$X$ and $Y$, and an isomorphism $Y_0\to X_0$ such that the induced field isomorphism
$k(X)=k(X_0)\to k(Y_0)=k(Y)$ is $G$-equivariant.

If $\varrho$ is a rational action  of $G$ on $X$,  then by
\begin{equation*}
\pi^{}_{G, X}\colon X\dashrightarrow X\dss G
\end{equation*}
is denoted a rational quotient of $\varrho$, i.e.,  $X\dss G$ and $\pi^{}_{G, X}$ are resp. a variety and a
dominant rational map such that $\pi^{*}_{G, X}(k(X\dss G))=k(X)^G$ (see
\cite[Sect. 2.4]{PV94}).
Depending on the situation we
choose $X\dss G$ as a suitable variety within the class of birationally isomorphic ones.\;A {\it rational section}
for $\varrho$
is a rational map $\sigma\colon X\dss G\dashrightarrow X$ such that $\pi^{}_{G, X}\circ\sigma={\rm id}$.

\section{Rational coordinate}\label{s1}

\begin{definition}\label{rcoord} An irreducible element $c$ of the polynomial ring $k[x_1,\ldots, x_n]$ in variables
$x_1,\ldots, x_n$ with coefficients in $k$ is called a {\it rational coordinate} if there are the elements
$f_1,\ldots, f_{n-1}\in k(x_1,\ldots, x_n)$ such that
\begin{equation}\label{rc}
k(c, f_1,\ldots, f_{n-1})=k(x_1,\ldots, x_n).
\end{equation}
\end{definition}

If $c$ is a rational coordinate and \eqref{rc} holds, then the hypersurface $\{c=0\}$ is birationally isomorphic to ${\mathbf A}^{\hskip -.3mm n-1}$
and the rational map
\begin{equation*}
\tau\colon \bAn \dashrightarrow\bAn,\quad a\mapsto (c(a), f_1(a),\ldots, f_{n-1}(a))
\end{equation*}
 is an element of ${\rm Bir}\,\bAn$.\;Since $k(c, f_1,\ldots, f_{n-1})=k(c, f_1c^{d_1},\ldots, f_{n-1}c^{d_{n-1}})$ for any $d_1,\ldots, d_{n-1}\in \mathbf Z$, we may replace in \eqref{rc} every $f_i$ by an appropriate $f_ic^{d_i}$ and assume that the intersection of $\{c=0\}$ with the domain of definition of $\tau$ is nonempty. Then the image of $\{c=0\}$ under $\tau$ is defined and
its closure is the standard coordinate hyperplane $\{x_1=0\}$. In other words, the hypersurface $\{c=0\}$ is rectified by the birational automorphism $\tau\in {\rm Bir}\,\bAn$.
\begin{theorem}\label{split} Let $\varrho\colon S\times X\dashrightarrow X$ be a rational action of
a connected solvable affine algebraic group $S$
on an irreducible algebraic variety $X$.\;Let
\begin{equation}\label{quo}
\pi^{}_{S, X}\colon X\dashrightarrow X\dss S
 \end{equation}
 be a rational quotient of this action.\;Then there are an integer $m\geqslant 0$ and a birational isomorphism $\varphi\colon X\dss S\times \bAm \dashrightarrow X$ such that the following diagram is commutative
\begin{equation*}
\begin{matrix}
\xymatrix{
X\dss S\times \bAm \ar@{-->}[rr]^{\hskip 6mm\varphi}\ar[rd]_{{\rm pr}_1}&& X\ar@{-->}[dl]^{\pi^{}_{S, X}}\\
&X\dss S&
}
\end{matrix}.
\end{equation*}
\end{theorem}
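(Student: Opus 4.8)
The plan is to reduce the assertion to the case $\dim S=1$ and then climb up a composition series of $S$. Since every statement in the theorem is birational in nature, I would first use Rosenlicht's theorem \cite[Thm.\;1]{Ros56} to replace $\varrho$ by a regular action on a birationally isomorphic model; thus I may assume that $\pi^{}_{S,X}$ is a genuine rational quotient whose generic fibres are single $S$-orbits. All the objects in the desired diagram ($X\dss S$, $\varphi$, and the projection) are birational invariants, so nothing is lost.

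The inductive device is the structure theory of connected solvable groups: such an $S$ of positive dimension contains a one-dimensional closed connected normal subgroup $N$, with $N\cong\mathbf{G}_{\mathrm a}$ or $N\cong\mathbf{G}_{\mathrm m}$. Indeed, the last nontrivial term of the derived series is abelian, connected and $S$-stable; its torus part is central (as $S$ is connected and acts on a torus through a discrete group), while its unipotent part is a vector group on which $S$ acts linearly, so the Lie--Kolchin theorem yields an $S$-stable line. Restricting $\varrho$ gives a rational $N$-action with quotient $\pi^{}_{N,X}\colon X\dashrightarrow X\dss N$; since $N$ is normal, $S/N$ acts rationally on $X\dss N$, its quotient is identified with $X\dss S$ via $k(X)^S=(k(X)^N)^{S/N}$, and $\dim(S/N)=\dim S-1$. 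Inductively I obtain a birational isomorphism $\varphi_2\colon X\dss S\times\mathbf A^{m'}\dashrightarrow X\dss N$ over $X\dss S$, so that it remains to treat the one-dimensional group $N$.

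For the base case I would show that the fibration $\pi^{}_{N,X}$ is birationally trivial, namely $X\cong X\dss N\times\bA1$ over $X\dss N$ (or $X=X\dss N$, contributing $m=0$, when $N$ acts generically trivially). The key point is the existence of a rational section $\sigma\colon X\dss N\dashrightarrow X$. The generic fibre of $\pi^{}_{N,X}$ is, over $K:=k(X\dss N)$, a torsor under $N/H$, where $H\subseteq N$ is the generic stabilizer; since $N$ is abelian, $H$ is normal, and in characteristic zero $H$ is trivial (for $N\cong\mathbf{G}_{\mathrm a}$) or finite cyclic (for $N\cong\mathbf{G}_{\mathrm m}$), so $N/H$ is again $\mathbf{G}_{\mathrm a}$ or $\mathbf{G}_{\mathrm m}$. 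Because $\mathrm H^1(K,\mathbf{G}_{\mathrm a})=0$ and $\mathrm H^1(K,\mathbf{G}_{\mathrm m})=0$ (Hilbert's Theorem 90), the torsor is trivial and $\sigma$ exists; equivalently, connected solvable groups are special. The map $(y,\bar g)\mapsto g\cdot\sigma(y)$ then identifies $X\dss N\times(N/H)$ birationally with $X$ over $X\dss N$, and since $N/H\cong\mathbf{G}_{\mathrm a}$ or $\mathbf{G}_{\mathrm m}$ is birationally $\bA1$, this yields $\varphi_1\colon X\dss N\times\bA1\dashrightarrow X$ with $\pi^{}_{N,X}\circ\varphi_1=\mathrm{pr}_1$.

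Finally I would compose: setting $\varphi(q,u,v)=\varphi_1(\varphi_2(q,u),v)$ and $m=m'+1$ (or $m=m'$), a direct check using the factorization $\pi^{}_{S,X}=\pi^{}_{S/N,X\dss N}\circ\pi^{}_{N,X}$ together with the two intertwining relations $\pi^{}_{N,X}\circ\varphi_1=\mathrm{pr}_1$ and $\pi^{}_{S/N,X\dss N}\circ\varphi_2=\mathrm{pr}_1$ gives $\pi^{}_{S,X}\circ\varphi=\mathrm{pr}_1$, as required. I expect the main obstacle to be the base case, i.e.\ the birational triviality of a one-dimensional action, for which the cleanest route is the cohomological vanishing above; the remaining inductive bookkeeping of the commuting diagrams is routine.
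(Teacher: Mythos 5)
Your proof is correct and follows essentially the same route as the paper: induction on $\dim S$ through a connected normal subgroup, with the base case of a one-dimensional group settled by the generic triviality of torsors under $\mathbf{G}_a$ and $\mathbf{G}_m$ — your appeal to $H^1(K,\mathbf{G}_a)=0$ and Hilbert 90 is exactly what the paper's citation of Rosenlicht's rational-section theorem for torsors under connected solvable groups amounts to. The only (immaterial) structural difference is that you peel off a one-dimensional normal subgroup $N$ at the bottom and apply the inductive hypothesis to $S/N$ acting on $X\dss N$, whereas the paper takes $N$ of codimension one, applies induction to $N$ acting on $X$, and then applies the one-dimensional case to $S/N$ acting on $X\dss N$.
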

\begin{proof} Replacing $X$ by a birationally isomorphic variety, we may (and shall) assume
that the action $\varrho$ is regular.
Put
\begin{equation}\label{maxd}
m_{S, X}:=\underset{x\in X}{\max} \dim S\cdot x.
\end{equation}

First, consider the case
\begin{equation}\label{1dim}
\dim S=1.
\end{equation}
In this case
$m_{S, X}\leqslant 1$.\;If $m_{S, X}=0$, the action $\varrho$ is trivial, hence $X\dss S=X$, $\pi^{}_{S, X}={\rm id}$, and the claim is clear.\;Now let $m_{S, X}=1$.\;This means that $S$-stabilizers of points of a dense open subset are finite.\;In this case, we may assume that
\begin{equation}\label{rho}
\mbox{the action
$\varrho\,$ is locally free.}
\end{equation}

To prove this claim, recall
(see, e.g.,\,\cite[Thm.\,3.4.9]{Sp98}) that, given
\eqref{1dim}, we have $S={\mathbf G}_{a}$ or ${\mathbf G}_{m}$.\;If $S={\mathbf G}_{a}$,
then the claim follows from the fact that, due to the assumption ${\rm char}\,k=0$, there are no nontrivial finite subgroups in $S$.\;If $S={\mathbf G}_{m}$, then $S/F$ is isomorphic to $S$ for any finite subgroup $F$, see, e.g., \cite[2.4.8(ii) and 6.3.6]{Sp98}.\;Therefore, taking as $F$ the kernel of $\varrho$, we may assume that
 $\varrho$ is faithful.\;Since
 $S$ is a torus, this, in turn, implies that $\varrho$ is locally free, see \cite[Lemma 2.4]{P13}.\;Thus \eqref{rho} holds.

 Given \eqref{rho}, by \cite[Thm.\,2.13]{CTKPR11}  we may replace $X$ by an appropriate $S$-invariant open subset and assume  that \eqref{quo} is a torsor.\;Since $S$ is a connected solvable affine algebraic group, by \cite[Thm.\,10]{Ros56} this torsor admits a rational section and therefore is trivial over an open subset of $X\dss S$.
  As the group variety of $S$ is birationally isomorphic to
 $\bA1$, this completes the proof of theorem in the case when \eqref{1dim} holds.

In the general case we argue by induction on $\dim S$.\;If $\dim S>0$, then solvability of $S$ yields the
existence of a closed connected normal subgroup $N$ in $S$ such that the (connected solvable affine) algebraic group $G:=S/N$ is one-dimensional.\;Put $Y:=X\dss N$.\;By the inductive assumption, there are an integer $r\geqslant 0$ and a birational isomorphism $\lambda\colon Y\times \bAr \dashrightarrow X$ such that the following diagram is commutative
\begin{equation}\label{N}
\begin{matrix}
\xymatrix{
Y\times \bAr \ar@{-->}[rr]^{\hskip 6mm\lambda}\ar[rd]_{{\rm pr}_1}&& X\ar@{-->}[dl]^{\pi^{}_{N, X}}\\
&Y&
}
\end{matrix}.
\end{equation}

 Since $N\lhd S$ and $\pi^{*}_{N, X}(k(Y))=k(X)^N$, the action $\varrho$ induces a rational action of $G$ on $Y$ such that
 \begin{align}
 Y\dss G&=X\dss S,\label{ide}\\
 \pi^{}_{S, X}&=\pi^{}_{G, Y}\circ\pi^{}_{N,X}.\label{ide2}
 \end{align}

 Given \eqref{ide} and using the proved validity of theorem for one-dimensional groups, we obtain that there are an integer $t\geqslant 0$ and a birational isomorphism
 $\gamma\colon X\dss S\times \bAt \dashrightarrow Y$ such that the following diagram is commutative
\begin{equation}\label{gG}
\begin{matrix}
\xymatrix{
X\dss S\times \bAt \ar@{-->}[rr]^{\hskip 6mm\gamma}\ar[rd]_{{\rm pr}_1}&& Y\ar@{-->}[dl]^{\pi^{}_{G, Y}}\\
&X\dss S&
}
\end{matrix}.
\end{equation}

From \eqref{ide2} and diagrams \eqref{N},
\eqref{gG} we see that one can take $m=r+t$ and $\varphi=\lambda\circ (\gamma\times {\rm id}_{\bAr})$.\;This completes the proof.
 \quad $\square$ \renewcommand{\qed}{}\end{proof}

 \begin{remark}\label{dimens}
 {\rm The number $m$ in the formulation of Theorem \ref{split} is equal to the number $m^{}_{S,X}$ given by
 \eqref{maxd}.}
 \end{remark}
 \begin{cornonumber}\label{cor} In the notation of Theorem {\rm \ref{split}}, there are the elements
 $f_1,\ldots, f_m$ of $k(X)$  such that
 \begin{enumerate}
 \item[\rm(i)] $f_1,\ldots, f_m$ are algebraically independent over $k(X)^S$;
 \item[\rm(ii)]  $k(X)=k(X)^S(f_1,\ldots, f_m)$.
 \end{enumerate}
 \end{cornonumber}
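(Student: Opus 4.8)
The plan is to transport the commutative diagram produced by Theorem \ref{split} to the level of function fields, where the birational isomorphism $\varphi$ becomes a field isomorphism, and then take for $f_1,\dots,f_m$ the images of the coordinate functions on $\bAm$.

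Concretely, since $\varphi\colon X\dss S\times\bAm\dashrightarrow X$ is a birational isomorphism, its comorphism $\varphi^*\colon k(X)\to k(X\dss S\times\bAm)$ is an isomorphism of fields; set $\psi:=(\varphi^*)^{-1}$. Let $y_1,\dots,y_m\in k(\bAm)\subseteq k(X\dss S\times\bAm)$ be the standard coordinate functions on $\bAm$; they are algebraically independent over $k(X\dss S)$ and generate $k(X\dss S\times\bAm)$ over it, i.e.\ $k(X\dss S\times\bAm)=k(X\dss S)(y_1,\dots,y_m)$. I would then define
\[
f_i:=\psi(y_i)\in k(X),\qquad i=1,\dots,m.
\]
As $\psi$ is a field isomorphism, it preserves algebraic independence and field generation, so $f_1,\dots,f_m$ are algebraically independent over $\psi\bigl(k(X\dss S)\bigr)$ and $k(X)=\psi\bigl(k(X\dss S)\bigr)(f_1,\dots,f_m)$.

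It therefore remains only to check that $\psi\bigl(k(X\dss S)\bigr)=k(X)^S$, and this is exactly where the commutativity of the triangle in Theorem \ref{split} enters. From $\pi^{}_{S,X}\circ\varphi={\rm pr}_1$ one gets, on comorphisms, $\varphi^*\circ\pi^*_{S,X}={\rm pr}_1^*$, whence $\pi^*_{S,X}=\psi\circ{\rm pr}_1^*$. Since ${\rm pr}_1^*$ identifies $k(X\dss S)$ with the subfield $k(X\dss S)\subseteq k(X\dss S\times\bAm)$, applying $\psi$ and using the defining property $\pi^*_{S,X}(k(X\dss S))=k(X)^S$ of the rational quotient gives
\[
\psi\bigl(k(X\dss S)\bigr)=\psi\bigl({\rm pr}_1^*(k(X\dss S))\bigr)=\pi^*_{S,X}\bigl(k(X\dss S)\bigr)=k(X)^S.
\]
Feeding this back into the two relations above yields (i) and (ii).

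This argument is a routine translation into commutative algebra, so I do not expect a genuine obstacle; the only point demanding care is the bookkeeping of the comorphisms---in particular, verifying that $\psi$ sends $k(X\dss S)$ \emph{onto} $k(X)^S$, and not merely into some field containing it, which is precisely what the commutative triangle guarantees.
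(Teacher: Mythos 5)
Your argument is correct and is exactly the routine translation that the paper leaves implicit: the corollary is stated without proof as an immediate consequence of Theorem \ref{split}, obtained by passing to function fields, taking $f_i=\psi(y_i)$ for the coordinate functions $y_i$ on $\bAm$, and using the commutative triangle to identify $\psi\bigl(k(X\dss S)\bigr)$ with $k(X)^S$. Your extra care in checking that this identification is onto, via $\pi^*_{S,X}=\psi\circ{\rm pr}_1^*$ and the defining property of the rational quotient, is precisely the point that makes the omitted verification rigorous.
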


\begin{theorem}\label{raco}
Let a unipotent affine algebraic group $U$ regularly act on $\bAn$.\;If
\begin{equation}\label{md}
\underset{a\in \bAn}{\max} \dim U\cdot a=n-1,
 \end{equation}
 then $k[\bAn]^U=k[c]$, where $c$ is a rational coordinate in $k[\bAn]$.
\end{theorem}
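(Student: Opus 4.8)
The plan is to apply Theorem~\ref{split} to $S=U$ and $X=\bAn$, and then to upgrade the resulting field-theoretic description of $k(\bAn)^U$ to the desired ring-theoretic one. Since $\mathrm{char}\,k=0$, the unipotent group $U$ is connected and nilpotent, hence a connected solvable affine algebraic group, and the given action is regular, in particular rational. Thus Theorem~\ref{split} applies, and by Remark~\ref{dimens} together with hypothesis~\eqref{md} the integer $m$ there equals $\max_{a\in\bAn}\dim U\cdot a=n-1$. Consequently there is a birational isomorphism $\bAn\dss U\times{\mathbf A}^{n-1}\dashrightarrow\bAn$ over $\bAn\dss U$, and by the Corollary to Theorem~\ref{split} there exist $f_1,\dots,f_{n-1}\in k(\bAn)$, algebraically independent over $k(\bAn)^U$, with $k(\bAn)=k(\bAn)^U(f_1,\dots,f_{n-1})$. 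In particular $\mathrm{tr.deg}_k\,k(\bAn)^U=1$.

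Next I would pin down $k(\bAn)^U$ itself. As $\bAn$ is rational and $\pi_{U,\bAn}$ is dominant, $\bAn\dss U$ is a unirational curve, so Lüroth's theorem gives $k(\bAn)^U=k(u)$ for some $u\in k(\bAn)$. Granting for the moment that $k[\bAn]^U=k[c]$ for a suitable $c\in k[\bAn]$, the rest is formal: then $\mathrm{Frac}(k[c])=k(c)$ equals $\mathrm{Frac}(k[\bAn]^U)=k(\bAn)^U=k(u)$, so $k(c)=k(u)$ and the Corollary yields $k(\bAn)=k(c,f_1,\dots,f_{n-1})$. Moreover $c$ is irreducible: in any factorization of $c$ in the factorial ring $k[\bAn]$ the irreducible factors are permuted by the connected group $U$, hence fixed, so, $U$ having no nontrivial characters, each is $U$-invariant and therefore lies in $k[c]$; comparing degrees in $c$ shows that all but one factor are constant. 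By Definition~\ref{rcoord}, $c$ is then a rational coordinate.

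It remains to establish $k[\bAn]^U=k[c]$, which is the heart of the matter and the step I expect to be the main obstacle, as it is of Hilbert's fourteenth problem type. Put $A:=k[\bAn]^U=k[\bAn]\cap k(u)$. Being the intersection of the integrally closed ring $k[\bAn]$ with a subfield, $A$ is integrally closed; and since $\mathrm{tr.deg}_k\,k(u)=1\le 2$, Zariski's finiteness theorem shows that $A$ is a finitely generated $k$-algebra. I would then verify $\mathrm{Frac}(A)=k(u)$: writing an arbitrary $f\in k(\bAn)^U$ in lowest terms $f=P/Q$ in the factorial ring $k[\bAn]$, the pole divisor $\{Q=0\}$ is $U$-stable, so by connectedness each irreducible factor of $Q$ is a $U$-semi-invariant, hence a genuine invariant because $U$ has no nontrivial characters; thus $Q\in A$ and $P=fQ\in A$, giving $f\in\mathrm{Frac}(A)$. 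Finally, the units of $A$ are those of $k[\bAn]$, namely the nonzero constants.

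Assembling these facts, $C:=\mathrm{Spec}\,A$ is a smooth (normal of dimension one) affine curve whose function field $k(u)$ is rational; hence $C$ is the complement of a nonempty finite set of closed points in $\mathbb{P}^1$. The triviality of the units of $A=k[C]$ forces that set to consist of a single point, so $C\cong\bA1$ and $A=k[c]$ for some $c\in k[\bAn]$, completing the argument.
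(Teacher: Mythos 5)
Your proof is correct and follows essentially the same route as the paper's: Theorem \ref{split} with its Corollary and Remark \ref{dimens} supply $k(\bAn)=k(\bAn)^U(f_1,\dots,f_{n-1})$, while the identification $k[\bAn]^U=k[c]$ is obtained exactly as in the paper via Zariski's finiteness theorem, normality, L\"uroth's theorem, and triviality of units, with irreducibility of $c$ deduced from the absence of nontrivial characters of the connected unipotent group $U$. The only cosmetic differences are that you inline the proof that $k(\bAn)^U$ is the fraction field of $k[\bAn]^U$ (the paper cites Rosenlicht) and extract the transcendence-degree count from the splitting theorem rather than from Rosenlicht's theorem on generic orbits.
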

\begin{proof} By Rosenlicht's theorem \cite[Thm.\,2]{Ros56} and the fiber dimension theorem, \eqref{md} implies that the transcendence degree of $k(\bAn)^U$ over $k$ is $1$ (cf.\;\cite[Sect.\,2.3, Cor.]{PV94}).\;Since $U$ is unipotent,
$k(\bAn)^U$ is the field of fractions of $k[\bAn]^U$,  see
\cite[p.\,220, Lemma]{Ro61_2}.\;By \cite{Za54} these properties imply that
$k[\bAn]^U$ is a finitely generated $k$-algebra.\;Integral closedness of
$k[\bAn]$ yields integral closedness of $k[\bAn]^U$, see \cite[Thm.\;3.16]{PV94}.\;Thus $\bAn/\!\!/U:={\rm Spec}\,
k[\bAn]^U$ is an irreducible smooth affine algebraic curve.\;This curve is rational by L\"uroth's theorem because  $k(\bAn)^U$ is a subfield of $k(\bAn)$.\;We then conclude that $\bAn/\!\!/U$ is obtained  from $\P1$ by removing $s\geqslant 1$ points.\;Since $k[\bAn/\!\!/U]^\star=k^\star$, we have  $s=1$, i.e., $\bAn/\!\!/U=\bA1$, or, equivalently, $k[\bAn]^U=k[c]$ for an element $c\in k[\bAn]$.\;Since the group $U$ is unipotent, it is connected (in view of ${\rm char}\,k=0$) and
 admits no nontrivial algebraic homomorphisms  $U\to {\mathbf G}_m$.\;By \cite[Thm.\;3.1]{PV94}, this implies that every nonconstant irreducible element of $k[\bAn]$ dividing $c$ lies in $k[\bAn]^U$, which, in turn, easily implies irreducibility of $c$.

We now claim that $c$ is a rational coordinate in $k[\bAn]$.\;Indeed, since $k(\bAn)^U$ is the field of fractions of $k[\bAn]^U$, we have
$k(\bAn)^U=k(c)$.\;Hence by \eqref{md}, Remark \ref{dimens}, and Corollary of Theorem \ref{split}, there are the elements
$f_1,\ldots, f_{n-1}\in k(\bAn)$ such that $k(\bAn)=k(c,  f_1,\ldots, f_{n-1})$.\;Whence the claim by
Definition \ref{rcoord}.
\quad $\square$ \renewcommand{\qed}{}\end{proof}

\section{Commuting derivations of unique factorization domains}\label{s2}

First, we shall introduce the notation.\;Let $G$ be a connected simply connected semisimple algebraic group.
Fix a maximal torus $T$ of $G$. Let ${\rm X}$ and ${\rm X}^\vee$ be respectively the character lattice and the cocharacter lattice of $T$ in additive notation, and let $\langle\ {,}\ \rangle\colon {\rm X}\times {\rm X}^\vee\to \mathbf Z$ be
the natural pairing.\;The value of an element $\varphi\in {\rm X}$ at a point $t\in T$ denote by
 $t^\varphi$.\;Let
$\Phi$ and $\Phi^+\subset {\rm X}$ respectively be the root system of $G$ with respect to $T$ and the system of positive roots
of
$\Phi$ determined by a fixed Borel subgroup $B$ of $G$ containing $T$.\;Given a root $\alpha\in \Phi$, denote by ${\alpha}^\vee\colon {\bf G}_m\to T$ and $U_\alpha$ respectively
  the coroot and the one-dimensional unipotent root subgroup of $G$
corresponding to $\alpha$.

Let
$\Delta=\{\alpha_1,\ldots, \alpha_r\}$ be the system of simple roots of $\Phi_+$
indexed as in \cite{B68}.\;If $I$ is a subset
of $\Delta$, let $\Phi_I$  be the set of elements of $\Phi$  that are linear
combinations of the roots in $I$.\:Denote by  $L_I$ be the subgroup of $G$ generated by $T$
and all the $U_\alpha$'s with $\alpha\in \Phi_I$.\;Let $U_I$ (respectively, $U^-_I$)
be the subgroup of
$G$ generated by all the $U_\alpha$'s with $\alpha\in \Phi^+\setminus \Phi_I$
(respectively, $-\alpha\in \Phi^+\setminus \Phi_I$ ). Then
$P_I := L_IU_I$ and $P_I^-:=L_IU_I^-$
are parabolic subgroups of $G$ opposite to one
another, $U_I$ and $U_I^-$
are the unipotent radicals of $P_I$ and $P_I^-$ respectively,
$L_I$ is a Levi subgroup of $P_I$ and $P_I^-$, and
\begin{align}\label{U}
\dim U_I&=\dim U^-_I=|\Phi^+\setminus\Phi_I|,\\
\dim G &= \dim L_I + 2 \dim U_I^-.\label{G}
\end{align}
Every closed subgroup of $G$ containing $B$ is of the form $P_I$ for some $I$. Every parabolic subgroup of $G$ is conjugate to a unique $P_I$,
called {\it standard} (with respect to $T$ and $B$); see, e.g.,\;\cite[8.4.3]{Sp98}.

Let
$\mathcal D\subset {\rm X}$ be the monoid of highest
weights (with respect to $T$ and $B$) of simple $G$-modules.\;Given a weight $\varpi\in \mathcal D$,
let $E(\varpi)$ be a simple $G$-module
with $\varpi$ as the highest weight.

Denote by $\varpi_1,\ldots, \varpi_r$
the system of
all indecomposable elements
  (i.e.,
  {\it fundamental weights})  of $\mathcal D$ indexed
in such a way that
\begin{equation}\label{delta}
\langle \varpi_i, {\alpha}^\vee_j\rangle=\delta_{ij}.
\end{equation}
This system
freely
generates $\mathcal D$, i.e., for every weight $\varpi\in \mathcal D$ there are uniquely defined
nonnegative integers $m_1,\ldots, m_r$ such that $\varpi=m_1\varpi_1+\cdots+m_r\varpi_r$.\;By virtue of \eqref{delta},
\begin{equation}\label{numlab}
\langle\varpi, \alpha^\vee_i\rangle=m_i.
\end{equation}
The integers \eqref{numlab}
are called the {\it numerical labels of} $\varpi$.
 The ``labeled'' Dynkin diagram of
$\alpha_1,\ldots, \alpha_r$, in which $m_i$ is the label  of the
node $\alpha_i$ for every $i$,
is called {\it the Dynkin diagram of} $\varpi$.

  Given a nonzero $\varpi\in \mathcal D$, denote by  ${\mathbf P}(E(\varpi))$ the projective space of all one-dimensional linear subspaces of $E(\varpi)$.\;The
natural projection
$$
\pi\colon E(\varpi)\setminus \{0\}\to {\mathbf P}(E(\varpi))
$$
is $G$-equivariant with respect to the
natural action of $G$ on
${\mathbf P}(E(\varpi))$.\;The
fixed point set of $B$ in
${\mathbf P}(E(\varpi))$ is
a single point
$p(\varpi)$ and the
$G$-orbit $\mathcal O(\varpi)$ of $p(\varpi)$
is the unique
closed $G$-orbit in ${\mathbf P}(E(\varpi))$.

Consider in  $E(\varpi)$
the affine cone $X(\varpi)$ over $\mathcal O(\varpi)$, i.e.,
\begin{equation}\label{X}
X(\varpi)=\{0\}\sqcup\pi^{-1}(\mathcal O(\varpi)),
\end{equation}
It is a $G$-stable irreducible
closed subset of $E(\varpi)$.\;Let
 ${\mathcal A}(\varpi)$ be the coordinate algebra of
$X(\varpi)$:
$$
{\mathcal A}(\varpi)=k[X(\varpi)],
$$
 and let $n$ be the transcendence degree of ${\mathcal A}(\varpi)$ over $k$,
i.e.,
\begin{equation}\label{n}
n=\dim X(\varpi).
\end{equation}

Since every $U_\alpha$
is a one-dimensional unipotent group,
its natural action on
$X(\varpi)$
determines an algebraic vector field ${\mathcal F}_\alpha$ on $X(\varpi)$, which, in turn,
determines
a locally nilpotent derivation $\partial_\alpha$ of ${\mathcal A}(\varpi)$;
see \cite[1.5]{F06}.\;Actually,  $\partial_\alpha$ is induced by a locally nilpotent derivation of $k[E(\varpi)]$. Namely, as above, the natural action of $U_{\alpha}$ on $E(\varpi)$ determines a locally nilpotent derivation $D_\alpha$ of $k[E(\varpi)]$.\;Since the ideal ${\mathcal I}(\varpi)$ of $X(\varpi)$ in $k[E(\varpi)]$ is $D_\alpha$-stable, $D_\alpha$ induces a locally nilpotent derivation of
$\mathcal A(\varpi)=k[E(\varpi)]/{\mathcal I}(\varpi)$; the latter is $\partial_\alpha$.

\eject

\begin{theorem} \label{method}  For every nonzero weight $\varpi\in\mathcal D$, the following hold{\rm:}

\begin{enumerate}[\hskip 2.5mm \rm(i)]
\item The stabilizer $G_{p(\varpi)}$ of $p(\varpi)$ in $G$ is $P_{I(\varpi)}$, where
\begin{equation}\label{I}
I(\varpi)=\{\alpha\in\Delta\mid \langle\varpi, \alpha^\vee\rangle=0\}.
\end{equation}
\item $\dim U^-_{I(\varpi)}=\Phi^+\setminus\Phi_{I(\varpi)}=n-1$.
 \item
The stabilizer of a point in general position for the natural action of $U^-_{I(\varpi)}$ on $X(\varpi)$
is trivial.
\item
The set $\{\partial_{-\alpha} \mid \alpha \in \Phi^+\setminus\Phi_{I(\varpi)}\}$ of $n-1$ locally nilpotent derivations
 of the algebra ${\mathcal A}(\varpi)$ is linearly independent over ${\mathcal  A}(\varpi)$.
 \item  The following properties are equivalent{\rm:}
 \begin{enumerate}[\hskip .9mm \rm(a)]
 \item[\rm(C)]
 $\{\partial_{-\alpha} \mid \alpha \in \Phi^+\setminus\Phi_{I(\varpi)}\}$  is the set of commuting derivations.\;Equi\-va\-lently,  the unipotent group $U^-_{I(\varpi)}$ is commutative.

\item[\rm(D)]
In the Dynkin diagram of $\varpi$, every connected component $S$
has at most one
node with a nonzero label, and if such a
node $v$ exists, then
$S$ is not of type ${\sf E}_8$, ${\sf F}_4$, or ${\sf G}_2$,
and
$v$ is a black
node of $S$ colored as in the following\;table{\rm:}

\begin{center}
\vskip 3mm

\hskip -15mm\begin{tabular}{c|c}
\text{ {\small\rm type of} $S$} & {\small\rm colored} $S$
\\[2pt]
 \hline
 &\\[-9pt]
 ${\sf A}_l$ &$\begin{matrix}\xymatrix@=4.5mm@M= -.4mm{\bullet\ar@{-}[r] &\bullet\ar@{-}[r] &{\ \cdots\
} \ar@{-}[r]&\bullet\ar@{-}[r] &\bullet}\end{matrix}$\\
  ${\sf B}_l$ &$\begin{matrix}\xymatrix@=4.5mm@M=
-.4mm{\bullet \ar@{-}[r]&\circ \ar@{-}[r]& {\
\cdots \ }\ar@{-}[r]&\circ\ar@{=>}[r]&\circ}\end{matrix}$\\
   ${\sf C}_l$ &$\begin{matrix}\xymatrix@=4.5mm@M= -.4mm{\circ\ar@{-}[r]
   &\circ\ar@{-}[r]
   &{\ \cdots\ } \ar@{-}[r]&\circ\ar@{<=}[r]&
\bullet}\end{matrix}$\\
    ${\sf D}_l$ &$\begin{matrix}
\xymatrix@=4.5mm@M= -.4mm@R=3mm{ &&&&\bullet
\\
\bullet\ar@{-}[r]& \circ\ar@{-}[r]&{\ \cdots \
}\ar@{-}[r] & \circ\ar@{-}[ur] \ar@{-}[dr]
\\
&&&&\bullet }\end{matrix}$
\\
     ${\sf E}_6$ &\hskip -5mm$\begin{matrix}\xymatrix@=4.5mm@M= -.4mm{\bullet\ar@{-}[r] &
     \circ\ar@{-}[r] &
     \circ\ar@{-}[r] \ar@{-}[d]&
     \circ\ar@{-}[r] &
     \bullet\\
     &&\circ&&}\end{matrix}$\\
      ${\sf E}_7$ &
      $\begin{matrix}\xymatrix@=4.5mm@M= -.4mm{\circ\ar@{-}[r] &
     \circ\ar@{-}[r] &
     \circ\ar@{-}[r] \ar@{-}[d]&
     \circ\ar@{-}[r] &\circ\ar@{-}[r] &
     \bullet\\
     &&\circ&&}\end{matrix}$
\end{tabular}
\end{center}
\vskip 3mm
\end{enumerate}
\item ${\mathcal  A}(\varpi)^\star=k^\star$.

\item ${\mathcal  A}(\varpi)$ is a unique factorization domain if and only if $\varpi$ is a fundamental weight.

\item The following properties are equivalent{\rm:}
\begin{enumerate}
\item[$({\rm s}_1)$] $X(\varpi)$ is singular{\rm;}
\item[$({\rm s}_2)$] $\dim E(\varpi)\!>\!n${\rm;}
\item[$({\rm s}_3)$] $X(\varpi)\neq E(\varpi)$.
\end{enumerate}
The singular
locus of every singular $X(\varpi)$ is the vertex $0$.
\end{enumerate}
\end{theorem}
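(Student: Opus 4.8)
The plan is to prove the eight assertions by exploiting throughout that $X(\varpi)$ is the affine cone over the flag variety $\mathcal{O}(\varpi)\cong G/G_{p(\varpi)}$, together with the structure theory of parabolic subgroups recalled above. Assertion (i) I would obtain by the standard computation of the stabilizer of a highest weight line: the point $p(\varpi)$ is the line $\langle v_\varpi\rangle$ through a highest weight vector $v_\varpi$, on which $B$ acts through the character $\varpi$, so $B\subseteq G_{p(\varpi)}$ and hence $G_{p(\varpi)}=P_J$ for some $J$. A simple root $\alpha$ lies in $J$ precisely when $U_{-\alpha}$ preserves $\langle v_\varpi\rangle$, which happens iff the associated $\mathfrak{sl}_2$-lowering operator kills $v_\varpi$, i.e. iff $\langle\varpi,\alpha^\vee\rangle=0$; thus $J=I(\varpi)$. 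Assertion (ii) is then the dimension count $n=\dim X(\varpi)=\dim\mathcal{O}(\varpi)+1=\dim G/P_{I(\varpi)}+1=\dim U^-_{I(\varpi)}+1$, which with \eqref{U} yields $|\Phi^+\setminus\Phi_{I(\varpi)}|=n-1$.

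The thematic core is (iii)--(iv), which I would derive from the big cell. Since $U^-_{I(\varpi)}\cap P_{I(\varpi)}=\{e\}$, the orbit map $U^-_{I(\varpi)}\to\mathcal{O}(\varpi)$, $u\mapsto u\cdot p(\varpi)$, is an open immersion onto the big Bruhat cell; lifting to the cone, the vector $v_\varpi$ has trivial $U^-_{I(\varpi)}$-stabilizer, because any element fixing $v_\varpi$ fixes the line and so lies in $U^-_{I(\varpi)}\cap P_{I(\varpi)}$. As $\dim U^-_{I(\varpi)}=n-1$ by (ii), the orbit of $v_\varpi$ has the maximal possible dimension $n-1$; hence the locus of points with zero-dimensional stabilizer is open, dense, and there the stabilizer, being a finite subgroup of a unipotent group in characteristic zero, is trivial---this is (iii). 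For (iv), triviality of the generic stabilizer makes the differential of the orbit map embed $\mathrm{Lie}\,U^-_{I(\varpi)}=\bigoplus_{\alpha\in\Phi^+\setminus\Phi_{I(\varpi)}}\mathfrak{g}_{-\alpha}$ into the tangent space at a general point, so the values there of the vector fields $\mathcal{F}_{-\alpha}$ are linearly independent; any relation $\sum a_\alpha\partial_{-\alpha}=0$ with $a_\alpha\in\mathcal{A}(\varpi)$ would then force each $a_\alpha$ to vanish at such a point and hence identically.

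Assertion (v) I expect to be the main obstacle. Grading $\Phi$ by $\deg\beta:=\sum_{\alpha_i\in\Delta\setminus I(\varpi)}c_i$ for $\beta=\sum_i c_i\alpha_i$, one has $\mathrm{Lie}\,U^-_{I(\varpi)}=\bigoplus_{d\geq1}\mathfrak{g}_{-d}$, and commutativity of $U^-_{I(\varpi)}$ is equivalent to $\mathfrak{g}_{-d}=0$ for all $d\geq2$, i.e. to $\deg\beta\leq1$ for every $\beta\in\Phi^+$. Since the labeled nodes of the Dynkin diagram of $\varpi$ are exactly $\Delta\setminus I(\varpi)$, this holds iff each connected component carries at most one labeled node and that node has coefficient $1$ in the component's highest root. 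The unavoidable work is the type-by-type check, from the highest-root data in \cite{B68}, that the coefficient-one nodes are precisely the black nodes of the table---all nodes for ${\sf A}_l$, the extreme node $\alpha_1$ for ${\sf B}_l$ and $\alpha_l$ for ${\sf C}_l$, the three fork nodes $\alpha_1,\alpha_{l-1},\alpha_l$ for ${\sf D}_l$, the end nodes $\alpha_1,\alpha_6$ for ${\sf E}_6$, the single node $\alpha_7$ for ${\sf E}_7$, and none for ${\sf E}_8,{\sf F}_4,{\sf G}_2$---which reproduces condition (D).

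The last three assertions rest on the cone structure. For (vi), since $X(\varpi)$ is an affine cone, $\mathcal{A}(\varpi)=\bigoplus_{d\geq0}\mathcal{A}(\varpi)_d$ is a graded domain with $\mathcal{A}(\varpi)_0=k$ (the degree-zero part of the homogeneous coordinate ring of the irreducible projective $\mathcal{O}(\varpi)$), so comparing top and bottom degrees in a relation $fg=1$ forces $f,g\in k^\star$. For (vii), I would first invoke normality of the highest-weight cone $X(\varpi)$ and then identify $\mathrm{Cl}(X(\varpi))$ with $\mathrm{Pic}(\mathcal{O}(\varpi))=\mathrm{Pic}(G/P_{I(\varpi)})$ modulo the class of the very ample bundle defining the embedding; since this Picard group is free on the fundamental weights indexed by $\Delta\setminus I(\varpi)$ and the embedding class is $\varpi=\sum m_i\varpi_i$, the quotient vanishes iff there is a single labeled node with label $1$, i.e. iff $\varpi$ is fundamental, and factoriality of the normal domain $\mathcal{A}(\varpi)$ is equivalent to triviality of this class group. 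Finally for (viii), $(\mathrm{s}_2)\Leftrightarrow(\mathrm{s}_3)$ is just that a closed irreducible subvariety of an irreducible variety fills it iff their dimensions agree; and since $X(\varpi)\setminus\{0\}$ is a $\mathbf{G}_m$-bundle over the smooth $\mathcal{O}(\varpi)$ it is smooth, so the closed $G$-stable singular locus lies in $\{0\}$, while if $X(\varpi)\neq E(\varpi)$ smoothness at the vertex would force the cone to equal its $n$-dimensional tangent space, a proper $G$-stable linear subspace of the simple module $E(\varpi)$---impossible---so the vertex is singular, giving $(\mathrm{s}_1)\Leftrightarrow(\mathrm{s}_3)$ and locating the singular locus at $0$.
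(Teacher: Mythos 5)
Your proposal is correct, and for parts (i)--(iv) and (viii) it follows essentially the same route as the paper: (i) via the stabilizer of the highest weight line (you use the $\mathfrak{sl}_2$-lowering operator where the paper argues at the group level, showing that $B_{S_\alpha}\subseteq G_v$ forces the complete orbit $S_\alpha\cdot v$ to be a point); (ii) the same dimension count through \eqref{U}, \eqref{G}; (iii)--(iv) the same big-cell/generic-triviality argument, with linear independence of the $\mathcal F_{-\alpha}$ read off from the tangent space of a generic orbit; (viii) the same use of $G$-stability and simplicity of $E(\varpi)$ (the paper applies it to ${\rm T}_{X(\varpi),0}$ directly, you to the linear span of the cone --- the same thing). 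The genuine differences are in (v)--(vii), where the paper cites known results and you supply proofs. For (v) the paper quotes the classification of parabolics with abelian unipotent radical from \cite{RRS92} (after the same reduction to simple $G$), whereas you rederive it from the grading of $\Phi$ by $\Delta\setminus I(\varpi)$ and the coefficient-one criterion on the highest root; your type-by-type list of coefficient-one nodes is correct and reproduces the table, at the cost of the (standard, but unstated) fact that the nilradical of a parabolic is generated by its degree-one part. For (vi) the paper proves $G\cdot v=X(\varpi)\setminus\{0\}$ and invokes Rosenlicht's description of $k[G]^\star$ from \cite{Ro61_1}; your graded-domain argument ($\mathcal A(\varpi)_0=k$, compare extreme degrees in $fg=1$) is shorter and more elementary, and is perfectly valid since $X(\varpi)$ is an irreducible cone. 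For (vii) the paper simply cites \cite[Thms.\;4 and 5]{PV72}; your sketch via normality of the cone and ${\rm Cl}(X(\varpi))\cong{\rm Pic}(G/P_{I(\varpi)})/\mathbf Z[\mathcal L_\varpi]$ is essentially the content of that citation, and to be complete it would need references for projective normality of $\mathcal O(\varpi)$ and for the computation of ${\rm Pic}(G/P)$ --- nontrivial inputs on a par with what the paper cites. In short: nothing is wrong, and where you diverge you trade citations for self-contained (and in (vi) genuinely simpler) arguments.
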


\begin{proof}
(i): By the definition of $p(\varpi)$, the group $B$ is contained in $G_{p(\varpi)}$. Hence
\begin{equation}\label{p}
G_{p(\varpi)}=P_I\quad \mbox{for some $I$.}
\end{equation}

In order to prove (i),
fix a point $v\in \pi^{-1}(p(\varpi))$ and denote by $G_v$ its stabilizer in $G$ and by $\ell$ the line
$\pi^{-1}(p(\varpi))\cup\{0\}$ in $E(\varpi)$.\;We first show that the following properties of
a root $\alpha\in \Delta$ are equivalent:
\begin{enumerate}[\hskip 4.2mm\rm(a)]
\item $\alpha\in I$;
\item $\langle \varpi, \alpha^\vee\rangle = 0$;
\item the image of $\alpha^{\vee}$ is contained in $G_v$.
\end{enumerate}

The definitions of $p(\varpi)$ and $v$ imply that
\begin{equation}\label{t}
t\cdot v=t^\varpi v \quad \mbox{for every element $t\in T$},
\end{equation}
and the definition of $\langle\ {,}\ \rangle$ entails the equality
\begin{equation}\label{s}
(\alpha^\vee(s))^\varpi=s^{\langle\varpi,\alpha^\vee\rangle}\quad\mbox{for every element $s\in{\mathbf G}_m$}.
\end{equation}
Combining \eqref{t} and \eqref{s}, we obtain the equivalence (b)$\Leftrightarrow$(c).

(a)$\Rightarrow$(c): By \eqref{p}, the line $\ell$ is stable with respect to $U_\alpha$.\;Being unipotent, the group $U_\alpha$ has no nontrivial characters and, therefore, no nontrivial one-dimensional modules.\;This proves that  $U_\alpha$ is contained in $G_v$.

If (a) holds, then by \eqref{p} the line $\ell$ is stable with respect
to $U_{-\alpha}$ as well.\;The same argument as for $U_\alpha$ then shows that
$U_{-\alpha}$ is contained in $G_v$.\;Hence
$G_v$ contains the group $S_\alpha$ generated by $U_\alpha$ and $U_{-\alpha}$.\;But $S_\alpha$ contains
the image of $\alpha^\vee$.\;This proves the implication (a)$\Rightarrow$(c).

(c)$\Rightarrow$(a): Assume that (c) holds.\;Since, as explained above, $U_\alpha$ is contained in $G_v$,
the subgroup of $S_\alpha$ generated by $U_\alpha$ and the image of $\alpha^\vee$ is contained in $G_v$.\;This subgroup is a Borel subgroup of $S_\alpha$.\;Therefore the $S_\alpha$-orbit of $v$ is a complete subvariety of $E(\varpi)$, i.e., a point.\;This means that $S_\alpha$ is contained in $G_v$.\;Therefore,
$U_{-\alpha}$ is contained in $G_v$; whence (a) holds.\;This proves the implication
(c)$\Rightarrow$(a).

\smallskip

Combining  now \eqref{p} and \eqref{I} with the equivalence (a)$\Leftrightarrow$(c), we obtain the proof of part (i).

\smallskip

(ii): Since $X(\varpi)$ is the affine cone over $\mathcal O(\varpi)$, we have
\begin{equation}\label{cone}
\dim X(\varpi)=\dim \mathcal O(\varpi)+1.
\end{equation}
On the other hand,
\eqref{U}, \eqref{G}, and (i) entails
\begin{equation}\label{dO}
\dim \mathcal O(\varpi)=\dim U^-_{I_\varpi}.
\end{equation}
Combining \eqref{cone}, \eqref{dO}, and \eqref{n}, we obtain the proof of part (ii).

\smallskip

(iii): Since $U^-_I\cap P_I=\{e\}$
for every $I$, the stabilizer of $v$ for the natural action of  $U^-_{I_\varpi}$ on $X(\varpi)$
is trivial because of (i).\;Hence $\dim X(\varpi)$ is the maximum of dimensions of $U^-_{I_\varpi}$-orbits in
$X(\varpi)$.\;Since $U^-_{I_\varpi}$-orbits of points of
a dense open subset of  $X(\varpi)$ have maximal dimension, this means that
the $U^-_{I_\varpi}$-stabilizer of a point
in general position in  $X(\varpi)$ is finite.\;But $U^-_{I_\varpi}$ has no nontrivial
finite subgroups because it
is a connected unipotent group
and ${\rm char}\,k=0$.\;This proves
part (iii).

\smallskip

(iv): Given a point $a\in X(\varpi)$, denote  its
 $U^-_{I(\varpi)}$-orbit by $U^-_{I(\varpi)}\cdot a$.\;By (iii), taking a suitable $a$, we may assume that
 \begin{equation}\label{tang}
 \dim {\rm T}_{U^-_{I(\varpi)}\cdot a, a}=\dim U^-_{I(\varpi)}.
 \end{equation}
 Since $U^-_{I(\varpi)}=\prod_{\alpha\in \Phi^+\setminus \Phi_{I(\varpi)}} U_{-\alpha}$ (the product being taken in any order), and ${\rm char}\,k=0$,
\begin{equation}\label{F}
{\rm T}_{U^-_{I(\varpi)}\cdot a, a}=\mbox{the linear span of $\{\mathcal F_{-\alpha}(a)\mid \alpha\in  \Phi^+\setminus \Phi_{I(\varpi)}\}$ over $k$}.
\end{equation}
It follows from \eqref{tang}, \eqref{F}, and (ii) that all the vectors
$\mathcal F_{-\alpha}(a)$, where $\alpha\in  \Phi^+\setminus \Phi_{I(\varpi)}$ are linearly independent over $k$.\;Hence all the vector fields $\mathcal F_{-\alpha}$, where $\alpha\in  \Phi^+\setminus \Phi_{I(\varpi)}$,
are linearly independent over $A(\varpi)$.\;This proves part (iv).
\smallskip

(v):
Since standard parabolic subgroups of $G$ are products of standard parabolic subgroups of connected simple normal subgroups of $G$, the proof is reduced to the case, where $G$ is simple.\;In this case
 (C)$\Leftrightarrow$(D) follows from \eqref{I} and the known
 classification of parabolic subgroups that have commutative unipotent radical (see, e.g., \cite[Lemma 2.2 and Rem.\;2.3]{RRS92}).

\smallskip

(vi):
Since
$\varpi\neq 0$,\;the action of $T$ on
$\pi^{-1}(p(\varpi))$ is nontrivial and, therefore, transitive.\;Since the restriction of $\pi$ to $X(\varpi)\setminus \{0\}$ is a $G$-equivariant morphism
onto the orbit $O(\varpi)$, this entails that
 \begin{equation}\label{orb}
 G\cdot v=X(\varpi)\setminus \{0\}.
 \end{equation}

By \cite[Thm.\;2]{PV72},
\begin{equation*}
A(\varpi)\to  k[G\cdot v],\;\;f\mapsto f|_{G\cdot v}
\end{equation*}
is an isomorphism of $k$-algebras.\;On the other hand, the orbit map $G\to G\cdot v$ induces
the embedding of $k[G\cdot v]$ into $k[G]$, the coordinate algebra of $G$.\;By \cite[Thm.\;3]{Ro61_1}, every element of
$k[G]^\star$ is of the form $cf$, where $c\in k^\star$ and $f\colon G\to k^\star$ is a character of $G$.\;Being connected semisimple, $G$ has no nontrivial characters; whence $k[G]^\star=k^\star$.\;This proves part (vi).

\smallskip

(vii):  This is proved, basing on \cite{P74}, in \cite[Thms.\;4 and 5]{PV72}.

\smallskip

(viii):
By virtue of  \eqref{orb}, the singular locus of
$X(\varpi)$ is either $\{0\}$ or empty.\;In par\-ticular, $X(\varpi)$ is singular if and only if
$0$ is the singular point of $X(\varpi)$, i.e., if and only if $\dim {\rm T}_{X(\varpi), 0}>n$.\;The inclusion $X(\varpi)\subseteq E(\varpi)$ yields
the inclusion ${\rm T}_{X(\varpi), 0}\subseteq {\rm T}_{E(\varpi), 0}=E(\varpi)$,
and
since $0$ is a $G$-fixed point,
${\rm T}_{X(\varpi), 0}$ is a submodule of the $G$-module $E(\varpi)$.\;As
the latter is simple,  ${\rm T}_{X(\varpi), 0}\!=\!E(\varpi)$.\;This proves $({\rm s}_1)$$\Leftrightarrow$$({\rm s}_3)$.\;As $({\rm s}_2)$$\Leftrightarrow$$({\rm s}_3)$ is clear, this
completes the proof of (viii).
\quad $\square$ \renewcommand{\qed}{}\end{proof}

Thus, for every fundamental weight $\varpi$ such that
\begin{enumerate}[\hskip 4.2mm ---]
\item the property specified in Theorem \ref{method}(v)(D) holds;
\item the variety $X(\varpi)$ is singular,
\end{enumerate}
the answer to Question 1 for the pair $(\mathcal A, D)$, where
\begin{align*}
\mathcal A&:=\mathcal A(\varpi), \\[-1mm]
D&:=\{\partial_{-\alpha} \mid \alpha \in \Phi^+\setminus\Phi_{I(\varpi)}\},
\end{align*}
is negative.\;There are examples of such pairs in any dimension $n\geqslant 4$.\;

\begin{example}\label{ex1}
Let $G$ be of type ${\sf D}_\ell$, $\ell\geqslant 3$, and $\varpi=\varpi_1$.\;Denote by $V$ be the underlying vector of $E(\varpi)$ and by $\varphi_\varpi\colon G\to {\rm GL}(V)$ the homomorphism determining the $G$-module structure of $E(\varpi)$.\;Then  $\dim V=2\ell$
and  $\varphi_\varpi(G)$ is the orthogonal group
of a nondegenerate quadratic form $f$ on $V$. There is a basis
\begin{equation}\label{basis}
e_1,\ e_2,\ldots, e_\ell, e_{-\ell}, e_{-\ell+1},\ldots, e_{-1}
\end{equation}
of $V$ such that
$$
f=x_{-1}x_1+x_{-2}x_2+\cdots + x_{-\ell}x_\ell,
$$
where $x_i$  is the $i$th coordinate function on $V$ in basis \eqref{basis}.\;The variety $X(\varpi)$ coincides with that of all isotropic vectors of $f$,
$$
X(\varpi)=\{v\in V\mid f(v)=0\},
$$
which, in turn, coincides with the closure of the $G$-orbit of $e_1$.\;Hence, if
$\,{\mathcal P}_{2\ell}$ is the polynomial ring in $2\ell$ variables
$x_1,\ x_2,\ldots, x_\ell, x_{-\ell}, x_{-\ell+1},\ldots, x_{-1}$ with coefficients in $k$ (i.e., $\,{\mathcal P}_{2\ell}=k[E(\varpi)]$),
then
\begin{equation}\label{examp1}
{\mathcal A}(\varpi)={\mathcal P}_{2\ell}/(f).
\end{equation}
The $k$-algebra ${\mathcal A}(\varpi)$ is a unique factorization domain of transcendence degree $n:=2\ell-1$ over $k$,
and ${\mathcal A}(\varpi)^*=k^*$.\;The hypersurface  of zeros of $f$ in
 $V$ is not smooth, hence  ${\mathcal A}(\varpi)$ is not a polynomial ring over $k$.

 Identifying
every element of ${\rm GL}(V)$ with its matrix in basis \eqref{basis}, we may assume that
${\rm GL}(V)={\rm GL}_{2\ell}$
and that the elements of $\varphi_{\varpi}(T)$ (resp.  $\varphi_{\varpi}(B)$)
are diagonal (resp.\;upper triangular) matrices (see, e.g., \cite[Chap.\,VIII, \S{13}, no.\,4]{B75}).\;Using the explicit description of $\Phi$, $\Delta$, and $U_\alpha$'s available in this case (see loc.cit.),
 it is then not difficult to see that all the derivations $D_{-\alpha}$ of $\mathcal P_{2\ell}$, where $\alpha\in \Phi^+\setminus \Phi_{I(\varpi)}$,
 are precisely
 the following
 $n-1$ commuting derivations
 $D_j$, $j=2, 3,\ldots, \ell, -\ell,\ldots, -3, -2$, defined by the formula
\begin{align*}
D_j(x_i)&=\begin{cases}0 &\mbox{for $i\neq j$,}\\
x_1&\mbox{for $i=j$}\end{cases}\quad \mbox{if $i\neq -1$},\\
D_j(x_{-1})&=-x_{-j}.
\end{align*}

Let $\partial_j$ be the locally nilpotent derivation of  $\mathcal A(\varpi)$ induced (in view of
$D_j(f)=0$ and \eqref{examp1}) by $D_j$.\;Then $D:=\{\partial_j\}$ is the set of $n-1$ commuting derivations
that are linearly independent over $\mathcal A(\varpi)$; whence
\eqref{cond} holds (see, e.g., \cite[Prop.\;3.4]{Ma03}, \cite[Lemma 1]{DEFM11}).\;Thus
in this case the answer to
Question\;\boldmath$1$ is negative.\quad $\square$

\end{example}
\begin{example}\label{ex2}
Let $G$ be of type ${\sf B}_{\ell}$, $\ell\geqslant 2$, and $\varpi=\varpi_1$.\;The argument similar to that in Example \ref{ex1} shows
that if $\mathcal P_{2\ell+1}$ is the polynomial ring in $2\ell+1$ variables   $x_1,\ x_2,\ldots, x_\ell, x_0, x_{-\ell}, x_{-\ell+1},\ldots, x_{-1}$ with coefficients in $k$,  then
\begin{equation}\label{h}
\mathcal A(\varpi)=\mathcal P_{2\ell+1}/(h),\quad\mbox{where $h=x_0^2+x_{-1}x_1+x_{-2}x_2+\cdots + x_{-\ell}x_\ell$.}
\end{equation}
The $k$-algebra $\mathcal A(\varpi)$ is a unique factorization domain of transcendence degree $n:=2\ell$ over $k$, which is not a polynomial ring over $k$, and  $\mathcal A(\varpi)^*=k^*$.\;All the derivations $D_{-\alpha}$ of $\mathcal P_{2\ell+1}$, where $\alpha\in \Phi^+\setminus \Phi_{I(\varpi)}$,
 are precisely
 the following
 $n-1$ commuting derivations
 $D_j$, $j=2, 3,\ldots, \ell, 0, -\ell,\ldots, -3, -2$, defined by the formula
\begin{align*}
D_j(x_i)&=\begin{cases}0 &\mbox{for $i\neq j$,}\\
x_1&\mbox{for $i=j$}\end{cases}\quad \mbox{if $i\neq -1$},\\
D_j(x_{-1})&=\begin{cases}-x_{-j}&\mbox{for $j\neq 0$,}\\
2x_0&\mbox{for $j=0$}.
\end{cases}
\end{align*}

Let $\partial_j$ be the locally nilpotent derivation of  $\mathcal A(\varpi)$ induced (in view of
$D_j(h)=0$ and \eqref{h}) by $D_j$.\;Then $D:=\{\partial_j\}$ is the set of $n-1$ commuting derivations
that are linearly independent over $\mathcal A(\varpi)$; whence
\eqref{cond} holds.
Therefore, in this case the answer to
Question\;\boldmath$1$ is negative as well.\quad $\square$
\end{example}

  In Examples \ref{ex1} and \ref{ex2}, the algebras $\mathcal A(\varpi)$  are hypersurfaces (quadratic cones).\;In the general case,
  they are factor algebras
  of polynomial algebras modulo the ideals generated by finitely many quadratic forms.\;Namely, the $G$-module ${\rm S}^2(E(\varpi)^*)$ of  quadratic forms on $E(\varpi)$ contains a unique submodule
 (the Cartan component) $C(\varpi)$ isomorphic to $E(2\varpi)^*$; whence there is a unique submodule $M(\varpi)$ such that ${\rm S}^2(E(\varpi)^*)=C(\varpi)\oplus M(\varpi)$.\;It is known that the
 ideal of $k[E(\varpi)]$
 generated by $M(\varpi)$ is then the ideal of elements $k[E(\varpi)]$ vanishing on
  of $X(\varpi)$.\;Therefore,
$X(\varpi)$ is cut out in $E(\varpi)$\;by
 $$\frac{\dim E(\varpi)\big(\dim E(\varpi)+1\big)}{2}-\dim E(2\varpi)$$
 homogeneous quadrics (cf.\;\cite{Li82}).

 We note that a pair $(\mathcal A, D)$ with $\mathcal A$ of transcendence degree $3$ over $k$, for which the answer to Question 1 is negative, exists as well: basing on the famous theorem that the Koras--Russell threefold $X$ is not isomorphic to ${\bf A}\!^3$ (see  \cite{M.-L.96}), in \cite{EK05} is shown that one may take $\mathcal A=k[X]$.

 \section{Remarks}\label{rema}

1. The same arguments as in the proof of Theorem \ref{raco} prove the following

\begin{theorem}
Let $X$ be an irreducible affine $n$-dimensional variety endowed with a regular action of
a unipotent algebraic group $U$.\;Assume that
\begin{enumerate}[\hskip 4.2mm\rm(i)]
\item $X$ is unirational;
\item $X$ is normal;
\item $k[X]^\star=k^\star$;
\item $\underset{x\in X}{\max} \dim U\cdot x=n-1.$
\end{enumerate}
 Then there is an irreducible element $t$ of $k[X]$ and the elements
 $f_1,\ldots f_{n-1}\in k(X)$ such that
\begin{enumerate}[\hskip 4.2mm\rm(a)]
 \item $k[X]^U=k[f]$;
 \item $k(X)=k(t, f_1,\ldots, f_{n-1})$.
 \end{enumerate}
 In particular, $X$ is rational.
\end{theorem}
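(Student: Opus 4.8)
The plan is to mimic the proof of Theorem~\ref{raco}, which treated the special case $X=\bAn$, and to observe that each geometric step used there really only required the four hypotheses (i)--(iv) now imposed explicitly. Concretely, the goal is to show that $k[X]^U$ is (the coordinate ring of) a rational affine curve with no nonconstant invertible functions, force that curve to be $\bA1$, deduce that $k[X]^U=k[t]$ for an irreducible $t$, and finally produce the rational coordinates $f_1,\ldots,f_{n-1}$ from the Corollary of Theorem~\ref{split}.

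First I would establish that $\operatorname{tr.deg}_k k(X)^U=1$. By Rosenlicht's theorem \cite[Thm.\,2]{Ros56} the field $k(X)^U$ separates orbits in general position, and combined with the fiber dimension theorem, hypothesis (iv) (that the maximal orbit dimension is $n-1$) forces the transcendence degree of the invariant field to be exactly $n-(n-1)=1$ (cf.\;\cite[Sect.\,2.3, Cor.]{PV94}). Next, since $U$ is unipotent, \cite[p.\,220, Lemma]{Ro61_2} gives that $k(X)^U$ is the field of fractions of $k[X]^U$; Zariski's finiteness result \cite{Za54} then applies (transcendence degree one plus fraction-field condition) to show $k[X]^U$ is a finitely generated $k$-algebra. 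Normality of $X$ (hypothesis (ii)) yields integral closedness of $k[X]$, hence of the invariant ring $k[X]^U$ by \cite[Thm.\,3.16]{PV94}. Therefore $C:=\operatorname{Spec} k[X]^U$ is a smooth irreducible affine curve. Unirationality of $X$ (hypothesis (i)) makes $k(X)^U\subseteq k(X)$ a subfield of a unirational, hence purely-transcendental-up-to-finite-extension, field; by L\"uroth's theorem the one-dimensional field $k(X)^U$ is rational, so $C$ is a rational curve, i.e.\ $\P1$ minus $s\geqslant 1$ points.

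The step that pins down $s=1$ uses hypothesis (iii): since $k[X]^\star=k^\star$ and the inclusion $k[X]^U\hookrightarrow k[X]$ forces $k[C]^\star=k[X]^{U,\star}\subseteq k[X]^\star=k^\star$, the affine curve $C$ has no nonconstant units, which rules out removing more than one point from $\P1$; hence $C=\bA1$ and $k[X]^U=k[t]$ for some $t\in k[X]$. Irreducibility of $t$ follows exactly as in Theorem~\ref{raco}: $U$ is connected (as $\operatorname{char}k=0$) and admits no nontrivial homomorphism to ${\mathbf G}_m$, so by \cite[Thm.\,3.1]{PV94} every nonconstant irreducible factor of $t$ is itself an invariant, which combined with $k[X]^U=k[t]$ forces $t$ to be irreducible. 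This gives conclusion (a). For conclusion (b), note $k(X)^U=k(t)$ because the invariant field is the fraction field of $k[t]$; then hypothesis (iv), Remark~\ref{dimens}, and the Corollary of Theorem~\ref{split} (applied with the solvable group $S=U$) furnish $f_1,\ldots,f_{n-1}\in k(X)$, algebraically independent over $k(X)^U=k(t)$ with $k(X)=k(t)(f_1,\ldots,f_{n-1})=k(t,f_1,\ldots,f_{n-1})$, which is (b); in particular $k(X)$ is purely transcendental over $k$, so $X$ is rational.

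I do not expect a serious obstacle, since the argument is essentially identical to that of Theorem~\ref{raco} with $\bAn$ replaced by a general $X$ satisfying (i)--(iv); the only point requiring genuine care is verifying that the chain of cited results (Rosenlicht, Zariski, integral-closedness, L\"uroth, the unit argument, and \cite[Thm.\,3.1]{PV94}) truly uses nothing about $X$ beyond the four stated hypotheses. The mildest subtlety is checking that the Corollary of Theorem~\ref{split} is applicable here: Theorem~\ref{split} was stated for a \emph{rational} action of a connected solvable group, so one should note that the regular action of the (connected, solvable, unipotent) group $U$ on $X$ is in particular a rational action, and that $k(X)^U=k(t)$ identifies $X\dss U$ birationally with $\bA1$, whence $m=m_{U,X}=n-1$ by Remark~\ref{dimens} and the Corollary produces exactly the required $f_1,\ldots,f_{n-1}$.
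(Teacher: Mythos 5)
Your proposal is correct and follows exactly the route the paper intends: the paper's entire ``proof'' of this theorem is the remark that the same arguments as in the proof of Theorem~\ref{raco} apply, and you have carried out precisely that transposition, correctly locating where each of the hypotheses (i)--(iv) replaces the special properties of $\bAn$ (unirationality for L\"uroth, normality for the Zariski finiteness and integral-closedness steps, $k[X]^\star=k^\star$ for pinning down $s=1$, and the orbit-dimension hypothesis for the transcendence-degree count and for invoking Remark~\ref{dimens} with the Corollary of Theorem~\ref{split}). No gaps.
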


2. Theorem 1 in \cite{DEFM11} reads as follows:

\vskip 2mm

\noindent {\it Let $U$ be an $n$-dimensional unipotent group acting faithfully on an affine $n$-dimensional variety $X$ satisfying $\mathcal O(X)^\star=k^\star$.\;Then $X\cong \bAn$ if one of the following two conditions hold}:
\begin{enumerate}[\hskip 4.2mm \rm(a)]
\item {\it some $x\in X$ has trivial isotropy subgroup, or}
\item {$n=2$, $X$ is factorial, and $U$ acts without fixed points}.
\end{enumerate}

\vskip 2mm

The proof shows that, in fact,
$X$ is also assumed to be irreducible.\;We remark that,
actually, given (a),  the assumption $\mathcal O(X)^\star=k^\star$ is superfluous and, changing the proof, one may drop it.\;Moreover, in this case, more generally, affiness of $X$ may be replaced by quasi-affiness, the assumption
$\dim U=n$ may be dropped,
and (a) may be replaced by the assumption
\begin{equation}\label{hp}
\dim U_x+\dim X=\dim U.
\end{equation}

Indeed, \eqref{hp} implies that $\dim U\cdot x=\dim X$.\;On the other hand, by \cite[Thm.\,2]{Ro61_2}, unipotency of $U$ implies that
$U\cdot x$ is closed in $X$.\;Hence $U\cdot x=X$.\;Therefore, $X\cong U/U_x$, whence the claim by (i) in Introduction.

 \end{document}